\begin{document}
\newtheorem{theorem}{Theorem}[section]
\newtheorem{corollary}[theorem]{Corollary}
\newtheorem{lemma}[theorem]{Lemma}
\newtheorem{remark}[theorem]{Remark}
\newtheorem{example}[theorem]{Example}
\newtheorem{proposition}[theorem]{Proposition}
\newtheorem{definition}[theorem]{Definition}
\def\emptyset{\varnothing}
\def\setminus{\smallsetminus}
\def\id{{\mathrm{id}}}
\def\G{{\mathcal{G}}}
\def\H{{\mathcal{H}}}
\def\C{{\mathbb{C}}}
\def\N{{\mathbb{N}}}
\def\Q{{\mathbb{Q}}}
\def\R{{\mathbb{R}}}
\def\Z{{\mathbb{Z}}}
\def\Path{{\mathrm{Path}}}
\def\Str{{\mathrm{Str}}}
\def\st{{\mathrm{st}}}
\def\tr{{\mathrm{tr}}}
\def\opp{{\mathrm{opp}}}
\def\a{{\alpha}}
\def\be{{\beta}}
\def\de{{\delta}}
\def\e{{\varepsilon}}
\def\si{{\sigma}}
\def\la{{\lambda}}
\def\th{{\theta}}
\def\lan{{\langle}}
\def\ran{{\rangle}}
\def\isom{{\cong}}
\newcommand{\Hom}{\mathop{\mathrm{Hom}}\nolimits}
\newcommand{\End}{\mathop{\mathrm{End}}\nolimits}
\def\qed{{\unskip\nobreak\hfil\penalty50
\hskip2em\hbox{}\nobreak\hfil$\square$
\parfillskip=0pt \finalhyphendemerits=0\par}\medskip}
\def\proof{\trivlist \item[\hskip \labelsep{\bf Proof.\ }]}
\def\endproof{\null\hfill\qed\endtrivlist\noindent}

\title{A characterization of \\
a finite-dimensional commuting square\\
producing a subfactor of finite depth}
\author{
{\sc Yasuyuki Kawahigashi}\\
{\small Graduate School of Mathematical Sciences}\\
{\small The University of Tokyo, Komaba, Tokyo, 153-8914, Japan}\\
{\small e-mail: {\tt yasuyuki@ms.u-tokyo.ac.jp}}
\\[0,40cm]
{\small Kavli IPMU (WPI), the University of Tokyo}\\
{\small 5--1--5 Kashiwanoha, Kashiwa, 277-8583, Japan}
\\[0,40cm]
{\small Trans-scale Quantum Science Institute}\\
{\small The University of Tokyo, Bunkyo-ku, Tokyo 113-0033, Japan}
\\[0,05cm]
{\small and}
\\[0,05cm]
{\small iTHEMS Research Group, RIKEN}\\
{\small 2-1 Hirosawa, Wako, Saitama 351-0198,Japan}}
\maketitle{}

\begin{abstract}
We give a characterization of a finite-dimensional commuting square
of $C^*$-algebras with a normalized trace that produces 
a hyperfinite type II$_1$ subfactor of finite index and finite depth
in terms of Morita equivalent unitary fusion categories.  
This type of commuting squares was studied by N. Sato, and we
show that a slight generalization of
his construction covers the fully general case of such
commuting squares.  We also give a characterization of such
a commuting square that produces a given hyperfinite 
type II$_1$ subfactor of finite index and finite depth.
These results also give a characterization of certain
4-tensors that appear in recent studies of matrix product
operators in 2-dimensional topological order.
\end{abstract}

\section{Introduction}

\textsl{Subfactor} theory of Jones \cite{J} has revealed a wide range of
interconnections among many different topics in mathematics and
physics.  In mathematical studies of subfactors, a finite
dimensional \textsl{commuting square}, a combination of four
finite dimensional $C^*$-algebras with a trace satisfying a certain
compatibility condition, has played important roles
both in construction of examples and classification.  (See the
following section for a precise setting.)  Repeated basic
constructions of Jones applied to a finite dimensional commuting 
square produces a hyperfinite II$_1$ subfactor of finite index
as the limit algebras.
A type II$_1$ subfactor of finite index produces a sequence
of finite dimensional commuting square arising as \textsl{higher relative
commutants} through the Jones tower construction, 
called the \textsl{canonical} commuting squares.
If the original subfactor is \textsl{strongly amenable}
in the sense of \cite{P2},
this sequence recovers the original subfactor completely, due
to the celebrated classification theorem of Popa \cite{P2}.
If the \textsl{Bratteli diagrams} of the higher relative commutants
stabilize after a certain stage,
the original subfactor is said to be of \textsl{finite depth}.  A
hyperfinite II$_1$ subfactor of finite index and finite
depth is automatically strongly amenable.  For this reason,
a finite dimensional, but not necessarily canonical,
commuting square producing a subfactor of finite index
and finite depth has been important for more than 30 years.

The finite depth condition is also related to \textsl{$(2+1)$-dimensional
topological quantum field theory} and 
\textsl{$2$-dimensional conformal field theory}.
A type II$_1$ subfactor of finite index and finite depth
produces a \textsl{fusion category} of bimodules and the
connections to quantum topology and mathematical physics work
out with such categories.
Its relation to \textsl{2-dimensional topological order} has created
recent interest in subfactors of finite index and finite depth.
A finite dimensional
commuting square produces a \textsl{bi-unitary connection}, 
a certain kind of \textsl{$4$-tensor}, and a tensor network plays
the key role in recent studies of 2-dimensional topological order
\cite{BMWSHV}, \cite{K6}, \cite{LFHSV}.  (Also see
\cite{K4}, \cite{LW} for related topics.)  For these reasons,
we are interested in when a finite dimensional commuting square
produces a hyperfinite II$_1$ subfactor of finite index and
finite depth through repeated basic constructions.  
Our main theorem in this paper, Theorem \ref{main},
gives a characterization of such a commuting square.  Sato's results
\cite{S1}, \cite{S2}, \cite{S3} play the key roles in this 
characterization.  As a byproduct, we also give a characterization
of such a commuting square that produces a given fixed subfactor
of finite index and finite depth.  It is easy to see that countably
many different commuting squares produce the same subfactor of
finite index and finite depth, so we never have uniqueness.
We determine which commuting square
produce a specific subfactor.  We then give examples related to
the $A$-$D$-$E$ Dynkin diagrams and the Goodman-de la Harpe-Jones
subfactors.

We list \cite{EK2}, \cite{GHJ} and \cite{JS} for general references
on subfactor theory, \cite{BKLR}, \cite{K3} for connections to conformal
field theory and tensor categories, and \cite{K5} for 
relations to $2$-dimensional topological order.

This work was partially supported by 
JST CREST program JPMJCR18T6 and
Grants-in-Aid for Scientific Research 19H00640
and 19K21832. 

\section{Finite dimensional commuting squares and construction of subfactors}

We review a general construction of a hyperfinite type II$_1$ subfactor
from repeated basic constructions of a commuting square of finite
dimensional $C^*$-algebras after some basic definitions and results.
They are standard materials as in books
\cite[Chapter 9]{EK2}, \cite[Chapter 4]{GHJ},
\cite[Chapter 5]{JS}, but there is a small subtlety about 
so-called non-degeneracy, so we explain the construction in detail
and fix the notation.

Let $A\subset B$ be an inclusion of finite von Neumann algebras with
a faithful normalized trace $\tr$.  (That is, we require $\tr(1)=1$.)
Then the conditional expectation $E_A$ from $B$ to $A$ is uniquely
determined by the property $\tr(ab)=\tr(a E_A(b))$ for
$a\in A$, $b\in B$.  We have the following definition as in
\cite[Definition 9.52]{EK2},
\cite[Section 4.2]{GHJ},
\cite[Definition 5.1.7]{JS}.
(This notion was introduced in \cite[Lemma 1.2.2]{P1}
originally.)

\begin{definition}{\rm
Let  
\[
\begin{array}{ccc}
B_{00} & \subset & B_{01}\\
\cap && \cap \\
B_{10} & \subset & B_{11}
\end{array}
\]
be inclusions of finite dimensional $C^*$-algebras
with faithful normalized trace $\tr$ on $B_{11}$.
When one of the following, mutually equivalent conditions
holds, we say that this is a \textsl{commuting square}.
\begin{enumerate}
\item We have $E_{B_{01}}=E_{B_{00}}$ on $B_{10}$.
\item We have $E_{B_{01}}(B_{10})=B_{00}$.
\item We have $E_{B_{10}}=E_{B_{00}}$ on $B_{01}$.
\item We have $E_{B_{10}}(B_{01})=B_{00}$.
\item We have $E_{B_{01}}E_{B_{10}}=E_{B_{00}}$.
\item We have $E_{B_{10}}E_{B_{01}}=E_{B_{00}}$.
\end{enumerate}
}\end{definition}

We now assume that all the four Bratteli diagrams for
$B_{00}\subset B_{01}$, $B_{00}\subset B_{10}$,
$B_{10}\subset B_{11}$, and $B_{01}\subset B_{11}$ are
connected.  We then have the following definition.

\begin{definition}\label{sym}{\rm
If we have ${\mathrm{span}\; }B_{01}B_{10}=B_{11}$ for
the above commuting square, we say that it is
\textsl{symmetric}.
}\end{definition}

See \cite[page 553]{EK2},
\cite[Definition 5.3.6]{JS},
\cite[Definition 1.8]{Sc} for other equivalent
conditions of being symmetric.  Also see
\cite[Corollary 5.3.4]{JS} for this equivalence.
As in \cite[Remark 5.3]{JS},
a symmetric commuting square is also called a
\textsl{non-degenerate} commuting square.
We now consider a symmetric commuting square
as in Definition \ref{sym}.

We also recall the following definition of the
basic construction \cite[Definition 3.1.1]{JS},
\cite[Definition 9.21]{EK2}.

\begin{definition}{\rm
Let $A\subset B$ be an inclusion of finite von Neumann
algebras with a normalized trace $\tr$ on $B$. 
Let $L^2(B)$ be the completion of $B$ with respect to
the inner product $\langle x,y\rangle=\tr(y^*x)$ for
$x,y\in B$ and identify $L^2(A)$ as the closure of $A$
within $L^2(B)$.  Let $B$ act on $L^2(B)$ by the left
multiplication.  Let $e_A$ be the orthogonal projection
from $L^2(B)$ onto $L^2(A)$.  We call $e_A$ the
\textsl{Jones projection}.  We call the von Neumann
algebra generated by $B$ and $e_A$ on $L^2(B)$ 
\textsl{basic construction}.  This basic construction
gives a finite von Neumann algebra with a natural trace.
}\end{definition}

We start with a commuting square as in Definition \ref{sym}.
We consider the basic construction $B_{02}$
for $B_{00}\subset B_{01}$ and the one $B_{12}$ for
$B_{10}\subset B_{11}$.  We can naturally identify the
Jones projection $e_{B_{00}}$ and $e_{B_{10}}$ and regard
$B_{02}$ as a subalgebra $B_{12}$.  Then 
\[
\begin{array}{ccc}
B_{01} & \subset & B_{02}\\
\cap && \cap \\
B_{11} & \subset & B_{12}
\end{array}
\]
is again a symmetric commuting square.  We can repeat this
procedure and obtain the following sequence of symmetric
commuting squares.
\[
\begin{array}{cccccccc}
B_{00} & \subset & B_{01}& \subset & B_{02}& \subset & B_{03}&\subset\cdots\\
\cap && \cap && \cap && \cap &\\
B_{10} & \subset & B_{11}& \subset & B_{12}& \subset & B_{13}&\subset\cdots.
\end{array}
\]

We can also apply the basic construction vertically to the original
commuting square.  The horizontal and vertical basic constructions
are compatible and we have
a double sequence $\{B_{kl}\}_{k,l}$ of finite
dimensional $C^*$-algebras with trace $\tr$.
We label $e_k$ for the vertical Jones projection for
$B_{k-1,0}\subset B_{k,0}$ and label $f_l$ for
the horizonal Jones  projection for $B_{0,l-1}\subset B_{0,l}$.
Note that the vertical Jones projection $e_k$ is for
the basic construction of
$B_{k-1,l}\subset B_{k,l}$ for all $l$ and
the horizonal Jones projection $f_l$
is for the basic construction of
$B_{k,l-1}\subset B_{k,l}$ for all $k$.
We now have the following relations.
\[
B_{kl}=\begin{cases}
B_{00},&\hbox{if\ }k=0\hbox{\ and\ }l=0,\\
\langle B_{10},e_1,e_2,\dots,e_{k-1}\rangle,
&\hbox{if\ }k>0\hbox{\ and\ }l=0,\\
\langle B_{01},f_1,f_2,\dots,f_{l-1}\rangle,
&\hbox{if\ }k=0\hbox{\ and\ }l>0,\\
\langle B_{11},e_1,e_2,\dots,e_{k-1},f_1,f_2,\dots,f_{l-1}\rangle,
&\hbox{if\ }k>0\hbox{\ and\ }l>0.\\
\end{cases}
\]
See \cite[Proposition 4.1.2]{GHJ},
\cite[Corollary 5.5.5]{JS} for more details.

We next take a nonzero projection $p\in B_{00}$ and
set $A_{kl}=pB_{kl}p$.  We normalize the trace on $A_{kl}$.  
Then the following is a commuting square again by
\cite[Proposition 4.2.6]{GHJ}.
\[
\begin{array}{ccc}
A_{k,l} & \subset & A_{k,l+1}\\
\cap && \cap \\
A_{k+1,l} & \subset & A_{k+1,l+1}.
\end{array}
\]
We then define $A_{k,\infty}$ to be the GNS-completion
of $\bigcup_{l=0}^\infty A_{k,l}$ with respect to the trace
$\tr$ and  $A_{\infty,l}$ to be the GNS-completion
of $\bigcup_{k=0}^\infty A_{k,l}$ with respect to the trace
$\tr$.  We also define $A_{\infty,\infty}$ to be the GNS-completion
of $\bigcup_{k=0}^\infty A_{k,k}$.  This is also the GNS-completions
of $\bigcup_{l=0}^\infty A_{\infty,l}$
and $\bigcup_{k=0}^\infty A_{k,\infty}$.
They are all hyperfinite II$_1$ factors, because
all the original Bratteli diagrams are finite and connected.
The subfactors $A_{k,\infty}\subset A_{k+1,\infty}$ and
$A_{\infty,l}\subset A_{\infty,l+1}$ have finite index.
The isomorphism classes of these subfactors are independent
of choice of $p$ by the relative McDuff property
\cite[Theorem 3.1]{B}.

Jones asked in 1995 whether the subfactor $A_{\infty,0}\subset A_{\infty,1}$
has a finite depth or not if so does 
$A_{0,\infty}\subset A_{1,\infty}$, and Sato gave a positive
answer to this question in \cite[Corollary 2.2]{S1}.
That is, the subfactor $A_{\infty,0}\subset A_{\infty,1}$
has a finite depth if and only if so does
$A_{0,\infty}\subset A_{1,\infty}$.  He further studied
this situation in \cite{S2}, \cite{S3}.

We reformulate this construction in terms of string algebras
and a bi-unitary connection as in \cite[Section 11.3]{EK2}.
Our symmetric commuting square 
\[
\begin{array}{ccc}
B_{00} & \subset & B_{01}\\
\cap && \cap \\
B_{10} & \subset & B_{11}
\end{array}
\]
gives a bi-unitary connection on four connected
graphs as in \cite[Theorem 11.2]{EK2},
\cite[Theorem 1.10]{Sc}.
(The notion of a bi-unitary connection was first introduced
in \cite{O1}, \cite{O2} with an extra axiom of flatness. 
See \cite[Definition 11.3]{EK2}
for the definition of a
bi-unitary connection.  It is also explained in
\cite[Definition 2.2]{K6}.)
Then our double sequence $\{A_{kl}\}_{k,l}$ is described
with the string algebra construction as in \cite[Section 11.3]{EK2}.
(If we have an increasing sequence of finite dimensional
$C^*$-algebras, then we have the Bratteli diagram.  Conversely,
if we have a Bratteli diagram given, we can construct the corresponding
increasing sequence of finite dimensional $C^*$-algebras canonically.
A pair of paths on the Bratteli diagram starting
at the top level and having the same ending vertex is called a
\textsl{string} and gives a canonical expression of a matrix
unit in a finite dimensional $C^*$-algebra.  The Jones projection
has a nice description in terms of string algebras as in
\cite[Definition 11.5]{EK2}.
See \cite[Definition 11.1]{EK2} for more details.)
However, there is a slight difference since we now have $A_{00}\neq\C$
in general. This matter is handled as follows.

Let $v_1,v_2,\dots,v_m$ be the
even vertices of ${\mathcal G}_0$ \cite[Section 10.3]{EK2}.
(The set of these vertices is denoted by $V_0$ in 
\cite[Section 2]{K6}.)  Let $A_{00}=\bigoplus_{j=1}^m M_{n_j}(\C)$,
where a direct summand $M_{n_j}(\C)$ corresponds to the vertex $v_j$.
(Here we have $n_j\ge0$ and at least one of $n_j$ is 
strictly positive.)  To deal with the matter of multiplicity,
we set $V$ be the set of $v_j^i$, where $1\le j\le m$ and
$1\le i\le n_j$.  If $v_j$ is connected to another vertex $w$ on
one of the four graphs, we interpret that $v_j^i$ is connected to
$w$ below.  Now our string is a pair $(\xi,\xi')$ of paths
on these graphs with $r(\xi)=r(\xi')$ as 
usual as in \cite[Section 11.3]{EK2}, but we now require
$s(\xi), s(\xi')\in V$ instead of $s(\xi)=s(\xi')$.
The multiplication and the $*$-operation are defined
in the same way as in \cite[Definition 11.4]{EK2}.
(This is a slight generalization of an idea of 
double starting vertices in \cite[Section 5]{K1}.)

Ocneanu's compactness argument \cite{O2} says that
we have $A'_{0,\infty}\cap A_{k,\infty}\subset A_{k,0}$
for the double sequence $\{A_{kl}\}_{k,l}$ as above.
A proof of the compactness argument is given in
\cite[Theorem 11.15]{EK2} only for the case $A_{00}=\C$, but
the same proof works for general $A_{00}$ without any change.

\section{Sato's construction and the main result}

We recall Sato's construction of a commuting square to realize
two mutually opposite Morita equivalent subfactors in \cite{S3}
in a slightly generalized form and
prove our main result saying that this construction covers
the fully general case.

We first recall some terminology related
to (opposite) Morita equivalence of
unitary fusion categories and subfactors.
We refer readers to \cite{EGNO} for general theory of fusion
categories and Morita equivalence.  
Also see \cite{K3} for a more brief description
relevant to subfactor theory (and conformal field theory).
A recent paper \cite{CJP} also explains the framework in
a concise manner.

Let $N\subset M$ be a type II$_1$ subfactor of finite index.
The $L^2$-completion $L^2(M)$ of $M$ with respect to the
trace $\tr$ gives an $N$-$M$ and $M$-$N$ bimodules
${}_N M_M$ and ${}_M M_N$.  (We simply write $M$ instead of
$L^2(M)$.)

We look at all irreducible $N$-$N$ bimodules arising from
${}_N M\otimes_N M\otimes_N\cdots\otimes_N M_N$.  If we have
only finitely many isomorphism classes for such irreducible
bimodules, then we say the subfactor has a \textsl{finite depth}.
This is equivalent to the condition that we have only finitely
many irreducible bimodules arising from irreducible decompositions
of ${}_M M\otimes_N M\otimes_N\cdots\otimes_N M_M$
up to isomorphism.  In this case, we consider a fusion
category of $N$-$N$ bimodules that are finite direct sums
of such irreducible $N$-$N$ bimodules.  We call it the fusion
category of $N$-$N$ bimodules arising from $N\subset M$.
We similarly define the fusion category of $M$-$M$ bimodules
arising from $N\subset M$.  These two fusion categories
are \textsl{Morita equivalent}.  We may realize any unitary fusion
category in this form using the hyperfinite II$_1$ factor
and any Morita equivalence between two such fusion categories
is realized in this way.
For a fusion category $\mathcal C$,
we define its global index to be the square sum of the 
(Perron-Frobenius) dimensions of all irreducible objects
up to isomorphism.  This is an invariant for Morita equivalence.
We define the global index of a subfactor
$N\subset M$ to be the global index of the fusion category
of $N$-$N$ bimodules arising from $N\subset M$.
(If a subfactor is of infinite depth, we define its global
index to be infinite.)

For a subfactor $N\subset M$, the opposite algebras, given by
reversing the order of the multiplication, give the opposite
subfactor $N^\opp\subset M^\opp$.  We say that the fusion 
category of the $N$-$N$ bimodules arising from $N\subset M$
and that of the $N^\opp$-$N^\opp$ bimodules 
arising from $N^\opp\subset M^\opp$ are \textsl{opposite Morita
equivalent}.
We also say that fusion categories ${\mathcal C}_1$
and ${\mathcal C}_2$ are opposite Morita equivalent, if
${\mathcal C}_1$ and ${\mathcal C}_3$ are Morita equivalent
and ${\mathcal C}_2$ and ${\mathcal C}_3$ are opposite Morita equivalent
for some fusion category ${\mathcal C}_3$.
(See \cite[Section 1]{S3} for its background.)

\begin{remark}{\rm
Sato simply said two fusion categories of bimodules are \textsl{equivalent}
in \cite{S2}, \cite{S3}
when we say they are Morita equivalent today.  His terminology
is also inconsistent with standard terminology of equivalence
of tensor categories, so we use ``Morita equivalence'' for this notion.
}\end{remark}

The following is essentially contained in \cite[Section 3]{S3}.

\begin{theorem}\label{sato}
Let $A\subset B$ and $C\subset D$ are Morita equivalent 
II$_1$ subfactors of finite index and finite depth and ${}_C X_A$ 
be a bimodule giving the Morita equivalence.
Define the finite dimensional $C^*$-algebras $A_{kl}$ as follows.
\[
A_{kl}=\begin{cases}
\End({}_C D\otimes_C D\otimes\cdots\otimes_C D\otimes_C
X \otimes_A B \otimes_A \cdots\otimes_A B_A),&\\
(k/2\hbox{\ copies of\ }D\hbox{\ and\ }
l/2\hbox{\ copies of\ }B),
&\hbox{if\ }k\hbox{\ and\ }l\hbox{\ are even},\\
\End({}_D D\otimes_C D\otimes\cdots\otimes_C D\otimes_C
X \otimes_A B \otimes_A \cdots\otimes_A B_A),&\\
((k+1)/2\hbox{\ copies of\ }D\hbox{\ and\ }
l/2\hbox{\ copies of\ }B),&\hbox{if\ }k\hbox{\ is\ odd\ and\ }
l\hbox{\ is even},\\
\End({}_C D\otimes_C D\otimes\cdots\otimes_C D\otimes_C
X \otimes_A B \otimes_A \cdots\otimes_A B_B),&\\
(k/2\hbox{\ copies of\ }D\hbox{\ and\ }
(l+1)/2\hbox{\ copies of\ }B),&\hbox{if\ }k\hbox{\ is\ even\ and\ }
l\hbox{\ is odd},\\
\End({}_D D\otimes_C D\otimes\cdots\otimes_C D\otimes_C
X \otimes_A B \otimes_A \cdots\otimes_A B_B),&\\
((k+1)/2\hbox{\ copies of\ }D\hbox{\ and\ }
(l+1)/2\hbox{\ copies of\ }B),&\hbox{if\ }k\hbox{\ and\ }
l\hbox{\ are odd}.\\
\end{cases}
\]
Then the double sequence $\{A_{kl}\}_{k,l}$ of commuting squares
gives a subfactor of finite index and finite depth.
\end{theorem}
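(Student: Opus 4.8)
The plan is to recognize the double sequence $\{A_{kl}\}_{k,l}$ as an instance of the string algebra / bi-unitary connection construction from Section 2, so that the limit algebras $A_{\infty,\infty}$ etc.\ are honest hyperfinite II$_1$ factors, and then to identify the horizontal subfactor $A_{\infty,0}\subset A_{\infty,1}$ with $C\subset D$ (up to the relative McDuff adjustment) and check finite depth directly from the fusion-categorical description. First I would make the inclusions $A_{kl}\subset A_{k,l+1}$ and $A_{kl}\subset A_{k+1,l}$ explicit: tensoring an endomorphism on the right by $\id_B$ (resp.\ $\id_D$) on a right tensor factor gives the horizontal inclusion, and tensoring by $\id_D$ on a left tensor factor gives the vertical one; I would verify that passing from $B_A$ to $B_B$ (and $D_C$ to $D_D$) at the appropriate parity is exactly a basic construction, using that $B=\End({}_A B_A)'$-type Frobenius reciprocity identifies $\End(\cdots\otimes_A B_B)$ with the Jones extension of $\End(\cdots\otimes_A B_A)$. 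This is the standard ``$M\subset M_1\subset M_2\subset\cdots$ as a tower of $\End$ of iterated tensor powers'' picture; the trace is the categorical (Markov) trace coming from the Perron--Frobenius dimensions, and the compatibility of the two basic constructions is automatic from associativity of the relative tensor product.

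Next I would check the commuting square condition for each elementary square. Because all four inclusions in a given square come from adding $\id$ on disjoint (left vs.\ right) tensor factors, the two conditional expectations $E$ onto $A_{k,l+1}$ and onto $A_{k+1,l}$ are ``partial categorical traces'' over disjoint regions, hence commute, and their composite is the partial trace over the union, which is $E$ onto $A_{k,l}$; this is condition (5)/(6) of the commuting-square definition. Connectedness of the four Bratteli diagrams follows from $A\subset B$ and $C\subset D$ being factors together with $X$ implementing a Morita equivalence (so the fusion graphs involved are connected). Symmetry/non-degeneracy, $\mathrm{span}\,A_{01}A_{10}=A_{11}$, I would get from the fact that $X\otimes_A B$ together with $D\otimes_C X$ generate $D\otimes_C X\otimes_A B$ over the relevant endomorphism algebras — again a Frobenius-reciprocity statement about the module category defined by $X$.

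Once the construction is set up as a symmetric commuting square, the machinery of Section 2 applies: repeated basic constructions produce the hyperfinite II$_1$ subfactor $A_{\infty,0}\subset A_{\infty,1}$, and by Sato's theorem \cite[Corollary 2.2]{S1} it is of finite depth iff $A_{0,\infty}\subset A_{1,\infty}$ is. So it suffices to analyze the vertical limit. By construction $A_{k,\infty}$ is the completion of $\bigcup_l A_{kl}$, and the increasing tower $A_{0,\infty}\subset A_{1,\infty}\subset A_{2,\infty}\subset\cdots$ is, up to a cutdown by a projection (harmless by the relative McDuff property \cite[Theorem 3.1]{B}), the Jones tower of $C\subset D$ read off from the bimodules ${}_C D\otimes_C\cdots\otimes_C D$; its standard invariant is exactly the fusion category of $C$-$C$ and $D$-$D$ bimodules generated by ${}_C D_D$, which is finite since $C\subset D$ has finite depth by hypothesis. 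Therefore $A_{0,\infty}\subset A_{1,\infty}$ has finite depth, hence so does $A_{\infty,0}\subset A_{\infty,1}$, and finite index is clear from finiteness of the graphs.

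The main obstacle I expect is the bookkeeping at step one: making the parity-dependent case distinctions in the definition of $A_{kl}$ match up cleanly with alternating horizontal and vertical basic constructions, and in particular verifying that the ``seam'' bimodule $X$ (which changes the left-module structure from over $C$ to over $D$ at odd $k$, and the right one from over $A$ to over $B$ at odd $l$) is inserted consistently so that every elementary square really is a basic construction on the nose and not merely up to a Morita-type isomorphism. Handling the non-trivial center of $A_{00}$ — i.e.\ that $A_{00}=\End({}_C X_A)$ need not be $\C$ — is exactly what the generalized string-algebra construction with the vertex set $V$ of $v_j^i$'s (recalled at the end of Section 2) is designed for, so I would invoke that rather than reprove it; the remaining content is the Frobenius-reciprocity identifications, which are routine given \cite{EGNO} and \cite[Chapter 9--11]{EK2}.
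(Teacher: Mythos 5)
Your proposal attempts a self-contained proof, whereas the paper essentially does not prove this theorem at all: it attributes the whole statement to Sato \cite{S3} and only supplies the one new ingredient needed for the relaxed hypothesis on ${}_C X_A$, namely connectedness of the four Bratteli diagrams (argued from the fact that ${}_A B_A$ generates the fusion category of $A$-$A$ bimodules, so $\dim\Hom({}_C Y\otimes_A B\otimes_A\cdots\otimes_A B_A,\,{}_C Z_A)>0$ for enough copies of $B$). Your setup steps --- the inclusions as identity tensor legs, the commuting-square condition from conditional expectations acting on disjoint tensor factors, non-degeneracy, connectedness --- are consistent with what Sato actually verifies, though your connectedness argument should be made as sharp as the paper's: it is not enough that $X$ is invertible between some categories; one needs that the irreducibles generated by ${}_C X\otimes_A B^{\otimes n}\otimes_A\bar X_C$ exhaust those generated by ${}_C D^{\otimes m}_C$ (Example 4.3 of the paper shows the construction genuinely fails, with disconnected Bratteli diagrams, when the global index of $C\subset D$ is too small).

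The genuine gap is in your final step. You assert that the tower $A_{0,\infty}\subset A_{1,\infty}\subset A_{2,\infty}\subset\cdots$ ``is the Jones tower of $C\subset D$'' and that ``its standard invariant is exactly the fusion category generated by ${}_C D_D$.'' That identification is the entire content of the theorem, not a consequence of the setup. You cannot obtain it from Popa's reconstruction theorem, since that requires strong amenability (hence finite depth) as an input --- exactly what you are trying to prove --- and Ocneanu's compactness argument only gives the upper bound $A_{0,\infty}'\cap A_{k,\infty}\subset A_{k,0}$, which neither identifies the principal graph nor even bounds the number of simple summands of the relative commutants (the dimensions of $A_{k,0}$ grow without bound, and a subalgebra of a multi-matrix algebra can have more simple summands than the ambient algebra). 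The fact that each vertical step is a basic construction likewise does not determine the standard invariant of the bottom inclusion: the $E_7$ commuting square producing a $D_{10}$ subfactor (Example 4.4 of the paper) is precisely a case where the limit subfactor's invariant differs from the ``expected'' one. What is missing is the matching lower bound, e.g.\ the observation that $\End({}_{C}D^{\otimes m}{}_C)\otimes\id$ sits inside $A_{0,\infty}'\cap A_{k,\infty}$ because it acts on tensor legs disjoint from those of every $A_{0,l}$, together with an argument that this containment is an equality (or at least that the Bratteli diagrams of the relative commutants stabilize). This is what Sato proves in \cite{S3}, and it is the step your sketch takes for granted.
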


The assumption on ${}_C X_A$ means that 
all the irreducible bimodules appearing in
${}_C X\otimes_A B \otimes_A\cdots\otimes_A B \otimes_A\bar X_C$
arise from the irreducible decompositions of
${}_C D\otimes_C\cdots\otimes_C D_C$.  In this case,
all the irreducible bimodules appearing in
${}_A \bar X\otimes_C D \otimes_C\cdots\otimes_C D \otimes_C X_A$
arise from the irreducible decompositions of
${}_A B\otimes_A\cdots\otimes_A B_A$ by the Frobenius reciprocity.
Note that we do not require irreducibility of ${}_C X_A$.

Another way to look at this ${}_C X_A$ is as follows.
We have a (possibly reducible)
$Q$-system (in the sense of \cite[Section 6]{L}) within 
the fusion category of the $A$-$A$ bimodules arising
from $A\subset B$ and it gives the that of the
$C$-$C$ bimodules arising from $C\subset D$ as the
dual fusion category.
(Longo \cite{L} uses type III factors and endomorphisms for
$Q$-systems.
See \cite[Proposition 2.1]{M} for the bimodule version
of the $Q$-system.)

After \cite[Remark 3.2]{S3}, Sato used a 
more restrictive form of ${}_C X_A$,
but his construction and proof work in the above setting.
Indeed, we need to check only connectedness of the four Bratteli
diagrams.  Suppose ${}_C Y_A$ and ${}_C Z_A$ are two irreducible
bimodules arising in some irreducible decomposition in the above
procedure.  Since ${}_A B_A$ generates the fusion category of
$A$-$A$ bimodules, we have
$\dim\Hom({}_C Y\otimes_A B\otimes_A\cdots\otimes_A B_A, {}_C Z_A)>0$
for sufficiently many copies of ${}_A B_A$.  This means that the
Bratteli diagram for $A_{2k,2l}\subset A_{2k,2l+1}$ is connected
for all $k, l$.  We can handle the other three Bratteli diagrams
similarly.

\begin{lemma}\label{intermediate}
Let $A\subset B\subset C\subset D$ be inclusions of type II$_1$ factors.
Suppose that $A\subset D$ is of finite index and finite depth and
that the global indices of $A\subset D$, $A\subset B$ and
$C\subset D$ are the same.  Then the bimodule ${}_B C_C$ gives
Morita equivalence between the fusion categories of
$B$-$B$ bimodules arising from $A\subset B$ and $C$-$C$ bimodules
arising from $C\subset D$.
\end{lemma}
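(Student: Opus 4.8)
The plan is to realize all the bimodule categories in sight inside one finite multifusion category (a semisimple $2$-category with four objects) and then read off the statement from the structure theory of indecomposable multifusion categories. First I would record that all the intermediate subfactors are of finite index (by multiplicativity of the index) and of finite depth. For $A\subset B$ and $C\subset D$ this is immediate, since the fusion category of $A$-$A$ bimodules arising from $A\subset B$ is a fusion subcategory of the fusion category of $A$-$A$ bimodules arising from $A\subset D$ (because ${}_A B_A$ is a direct summand of ${}_A D_A$ via $E^D_B$), and dually for $C\subset D$. For $B\subset C$ one first checks that $B\subset D$ is of finite depth: every $B$-$B$ bimodule appearing in a tensor power of ${}_B D_B$ is a constituent of the restriction to $B$-$B$ of one of the finitely many irreducible $D$-$D$ bimodules arising from $A\subset D$, using that ${}_D D\otimes_B D_D$ is a direct summand of ${}_D D\otimes_A D_D$; then $B\subset C$ is of finite depth because its $B$-$B$ bimodules form a fusion subcategory of those of $B\subset D$.

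Next, let $\mathcal{T}$ be the $2$-category with objects ``$A$'', ``$B$'', ``$C$'', ``$D$'' generated under relative tensor product, conjugation, direct sums and subobjects by the bimodules ${}_A B_B$, ${}_B C_C$, ${}_C D_D$. By the previous paragraph each of its sixteen corner categories has finitely many irreducible objects, so $\mathcal{T}$ is a finite multifusion category, and it is indecomposable because the three generating bimodules connect the four objects. I would then identify the two extreme diagonal corners: $\mathcal{T}_{AA}$ is the fusion category $\mathcal{A}$ of $A$-$A$ bimodules arising from $A\subset D$, and $\mathcal{T}_{DD}$ is the fusion category $\mathcal{D}$ of $D$-$D$ bimodules arising from $A\subset D$. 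The inclusion $\mathcal{A}\subseteq\mathcal{T}_{AA}$ is easy, since ${}_A D_D\isom {}_A B_B\otimes_B{}_B C_C\otimes_C{}_C D_D$ lies in $\mathcal{T}$, whence ${}_A D_A\isom{}_A D_D\otimes_D{}_D D_A$ (with ${}_D D_A$ the conjugate of ${}_A D_D$) lies in $\mathcal{T}_{AA}$ together with all of its tensor powers and subobjects. The reverse inclusion $\mathcal{T}_{AA}\subseteq\mathcal{A}$ is the main combinatorial point: any iterated relative tensor product of copies of $B$, $C$ and $D$ is, as an $A$-$A$ bimodule, a direct summand of a power of ${}_A D_A$ — one replaces each factor $B$ or $C$ by the summand-containing $D$ and rewrites each relative tensor product taken over $B$ or $C$ as a direct summand of the corresponding relative tensor product taken over $A$, which is legitimate because $[B:A]$ and $[C:A]$ are finite. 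Dually $\mathcal{T}_{DD}=\mathcal{D}$.

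At that point I invoke the structure theory of indecomposable multifusion categories: all four diagonal corners have the same global index, and each off-diagonal corner $\mathcal{T}_{ij}$ is an invertible $(\mathcal{T}_{ii},\mathcal{T}_{jj})$-bimodule category. The fusion category $\mathcal{B}$ of $B$-$B$ bimodules arising from $A\subset B$ is a fusion subcategory of $\mathcal{T}_{BB}$ (it is generated by ${}_B B_A\otimes_A{}_A B_B\in\mathcal{T}_{BB}$), and its global index equals that of the subfactor $A\subset B$, hence that of $A\subset D$, so $\mathrm{gi}(\mathcal{B})=\mathrm{gi}(\mathcal{A})=\mathrm{gi}(\mathcal{T}_{AA})=\mathrm{gi}(\mathcal{T}_{BB})$; since a fusion subcategory with the same global index as the ambient category coincides with it, $\mathcal{T}_{BB}=\mathcal{B}$. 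Symmetrically, using $\mathrm{gi}(C\subset D)=\mathrm{gi}(A\subset D)$, the corner $\mathcal{T}_{CC}$ equals the fusion category $\mathcal{Q}$ of $C$-$C$ bimodules arising from $C\subset D$. Finally, $\mathcal{T}_{BC}$ is an invertible, hence indecomposable, $(\mathcal{B},\mathcal{Q})$-bimodule category, and the $(\mathcal{B},\mathcal{Q})$-bimodule subcategory of $\mathcal{T}_{BC}$ generated by ${}_B C_C$ is nonzero; as an indecomposable semisimple module category has no nonzero proper module subcategory, this subcategory is all of $\mathcal{T}_{BC}$, which is exactly the assertion that ${}_B C_C$ gives a Morita equivalence between $\mathcal{B}$ and $\mathcal{Q}$.

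The step I expect to be the real work is the construction of $\mathcal{T}$ as a genuine finite multifusion category — that is, the finite-depth claims for the intermediate subfactors — together with the identification $\mathcal{T}_{AA}=\mathcal{A}$ and $\mathcal{T}_{DD}=\mathcal{D}$; once these are in place, the global-index bookkeeping and the two appeals to multifusion structure theory are routine.
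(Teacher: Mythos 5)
Your proposal is correct and follows essentially the same route as the paper: the paper's proof defines categories $\mathcal{C}_B$ and $\mathcal{C}_C$ (your corners $\mathcal{T}_{BB}$ and $\mathcal{T}_{CC}$) as the duals of the $A$-$A$ and $D$-$D$ categories of $A\subset D$ along ${}_A B_B$ and ${}_C D_D$, observes that ${}_B C_C$ implements the composite Morita equivalence between them, and then uses the global index hypothesis to identify them with the categories arising from $A\subset B$ and $C\subset D$. Your multifusion-category packaging, the finite-depth checks for the intermediate subfactors, and the identification of the corner categories simply make explicit the steps the paper compresses into ``it is easy to see.''
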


\begin{proof}
Let ${\mathcal C}_A$ and ${\mathcal C}_D$ be the fusion categories
of $A$-$A$ bimodules and $D$-$D$ bimodules arising from $A\subset D$
respectively.  Let ${\mathcal C}_B$ be the fusion categories of
$B$-$B$ bimodules arising from the relative tensor products of
${}_B B_A$, $A$-$A$ bimodules in ${\mathcal C}_A$ and ${}_A B_B$.
Similarly, let ${\mathcal C}_C$ be the fusion categories of
$C$-$C$ bimodules arising from the relative tensor products of
${}_C D_D$, $D$-$D$ bimodules in ${\mathcal C}_D$ and ${}_D D_C$.
Then it is easy to see that the bimodule ${}_B C_C$ gives
Morita equivalence between ${\mathcal C}_B$ and ${\mathcal C}_C$.
Now by the assumption on the global indices, we see that
${\mathcal C}_B$ is given by $A\subset B$ and 
${\mathcal C}_C$ is given by $C\subset D$.
\end{proof}

We now give our main result.

\begin{theorem}\label{main}
A double sequence $\{A_{kl}\}_{k,l}$ of commuting squares
of finite dimensional $C^*$-algebras given in the form in Section 2
gives a subfactor
of finite index and finite depth if and only if it
is of the form of 
Theorem \ref{sato}, where $A,B,C,D$ are hyperfinite.

In this case, the subfactor
$A_{0,\infty}\subset A_{1,\infty}$ is anti-isomorphic to
$B\subset B_1$, where $B_1$ is given by the basic construction
of $A\subset B$.
\end{theorem}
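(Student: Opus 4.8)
The plan is to prove the two directions separately. The "if" direction is essentially Theorem \ref{sato} together with the identification of the subfactor in the second statement, so the real content is the "only if" direction. For that, I would start from a general double sequence $\{A_{kl}\}_{k,l}$ built from a symmetric commuting square (in the generalized string-algebra form of Section 2) that produces a subfactor of finite index and finite depth. First I would invoke Sato's results: by \cite[Corollary 2.2]{S1} the finite-depth condition is symmetric between the horizontal and vertical subfactors, so both $A_{0,\infty}\subset A_{1,\infty}$ and $A_{\infty,0}\subset A_{\infty,1}$ have finite depth. Set $A=A_{0,\infty}$, $B=A_{1,\infty}$, so $A\subset B$ is a hyperfinite II$_1$ subfactor of finite index and finite depth; let $B_1$ be its basic construction. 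Dually, working with the opposite algebras, set $C^\opp=A_{\infty,0}^\opp$ and $D^\opp=A_{\infty,1}^\opp$ (using that reversing the order of tensor products corresponds to passing to opposite algebras), so that $C\subset D$ is again hyperfinite of finite index and finite depth, and by Sato's analysis in \cite{S2}, \cite{S3} the fusion categories attached to the two subfactors are opposite Morita equivalent.

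The next step is to produce the bimodule ${}_C X_A$ realizing the Morita equivalence and to check it satisfies the hypothesis of Theorem \ref{sato}. Here I would look at the algebra $A_{\infty,\infty}$, which contains both $A_{k,\infty}$ and $A_{\infty,l}$ for all $k,l$, and realize $X$ concretely as an $A_{\infty,1}$-$A_{1,\infty}$ (or $A_{\infty,0}$-$A_{0,\infty}$, after the opposite) bimodule sitting inside $L^2(A_{\infty,\infty})$ cut down appropriately; the commuting square condition is exactly what guarantees that the resulting $X$ intertwines the two grids correctly. The hypothesis of Theorem \ref{sato} — that every irreducible bimodule appearing in ${}_C X\otimes_A B\otimes_A\cdots\otimes_A B\otimes_A\bar X_C$ already arises from ${}_C D\otimes_C\cdots\otimes_C D_C$ — should follow from Ocneanu's compactness $A'_{0,\infty}\cap A_{k,\infty}\subset A_{k,0}$ (quoted at the end of Section 2) together with the finite-depth assumption, since compactness bounds the relative commutants that detect which bimodules occur. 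At this point Lemma \ref{intermediate} is the key tool: applying it to the chain $A_{\infty,0}\subset A_{\infty,1}$-type inclusions inside the double sequence (with the global-index equalities coming from the commuting-square/basic-construction structure, which preserves the index and hence the global index along the grid) identifies the ambient fusion categories as precisely those arising from $A\subset B$ and $C\subset D$, rather than merely some categories generated inside a larger ambient one.

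Once $A,B,C,D$ and ${}_C X_A$ are in hand, I would show that the double sequence $\{A_{kl}\}$ coincides with the one built from these data as in Theorem \ref{sato}. This is a matter of matching the finite-dimensional algebras level by level: the higher relative commutants of the grid $\{A_{kl}\}$ are, on the one hand, the spaces computed by the string-algebra construction from the bi-unitary connection, and on the other hand the $\End$-spaces of iterated relative tensor products of $D, X, B$; finite depth plus compactness forces these to agree, and the connection itself is recovered from the associativity/fusion data of the bimodule category, i.e. from the $6j$-symbols. The identification $A_{0,\infty}\subset A_{1,\infty}$ being anti-isomorphic to $B\subset B_1$ then comes from reading off, in the Theorem \ref{sato} model, the horizontal limit: fixing the vertical index and letting $l\to\infty$ builds exactly the Jones tower of $A\subset B$ on the $B$-side, with the anti-isomorphism accounting for the opposite convention used to put $C\subset D$ on the left.

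The step I expect to be the main obstacle is verifying that the bimodule ${}_C X_A$ extracted from the grid genuinely satisfies the finiteness hypothesis of Theorem \ref{sato} — equivalently, that the fusion categories one sees are the "correct" ones and not strictly larger. This is where Ocneanu compactness and the finite-depth hypothesis must be combined carefully; without finite depth the relevant bimodule categories would be infinite and the argument collapses, and the delicate point is that compactness as stated only controls $A'_{0,\infty}\cap A_{k,\infty}$, so one must bootstrap from there (together with the symmetric role of the two directions, via Sato) to control the full two-sided structure. Lemma \ref{intermediate} is precisely designed to finish this once the global-index bookkeeping is done, so the crux is really establishing those global-index equalities along the grid.
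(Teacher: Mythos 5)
Your overall skeleton (Sato for the ``if'' direction; Ocneanu compactness, global-index bookkeeping and Lemma \ref{intermediate} for the ``only if'' direction) matches the paper's, but there is a genuine gap in how you identify the four factors $A,B,C,D$, and it is not repairable by bookkeeping. You set $A=A_{0,\infty}$, $B=A_{1,\infty}$ and take $C\subset D$ to be (the opposite of) $A_{\infty,0}\subset A_{\infty,1}$. This cannot be correct: in Sato's model the horizontal limit subfactor is anti-isomorphic to $B\subset B_1$, so your choice would force $A_{0,\infty}\subset A_{1,\infty}$ to be anti-isomorphic to its own basic construction, which fails in general because the principal and dual principal graphs swap under the shift. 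The factors that actually carry the vertical tensor direction are not among the four limit factors you name: they are the new factors $R$ and $S$ obtained as GNS-completions of $\bigcup_k (A'_{0,\infty}\cap A_{k,\infty})$ and $\bigcup_k (A'_{1,\infty}\cap A_{k,\infty})$, which by compactness sit inside $P=A_{\infty,0}$. One has $R=P$ precisely when the connection is flat, so any argument bypassing $R$ and $S$ covers only the flat case. The decisive step, which your plan is missing, is a \emph{second} application of compactness to the columns over $R\subset A_{\infty,l}$, yielding $R'\cap A_{\infty,l}=A_{0,l}$ and $S'\cap A_{\infty,l}=A_{1,l}$; via \cite[Lemma 11.1]{EK2} this rewrites every $A_{kl}$ as $\End({}_R Q\otimes_P\cdots\otimes_P Q)$, etc., and puts the grid in the form of Theorem \ref{sato} with $X={}_R P_P$ --- not a bimodule cut out of $L^2(A_{\infty,\infty})$, and with no appeal to $6j$-symbol reconstruction.

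The global-index equalities are likewise not ``preserved along the grid by the basic construction''; they are substantive inputs. The paper needs Popa's classification theorem (finite depth implies strong amenability) to identify $S\subset R$ anti-isomorphically with the downward basic construction $N_1\subset N$ of $N\subset M=A_{0,\infty}\subset A_{1,\infty}$, Sato's \cite[Corollary 2.5]{S1} for the equality of the global indices of the horizontal and vertical subfactors, and \cite[Theorem 2.4]{S2} to compare $P\subset Q$ with $S\subset Q$; only then does Lemma \ref{intermediate}, applied to the chain $S\subset R\subset P\subset Q$, verify the Morita-equivalence hypothesis of Theorem \ref{sato}. The anti-isomorphism assertion of the theorem is then a direct consequence of Popa's theorem applied to $S\subset R$, rather than something read off from the horizontal limit of your (mis-identified) model.
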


\begin{proof}
The ``if'' part is already proved by Sato \cite{S3}, so we give a proof
for the converse.

Suppose that the subfactor $A_{0,\infty}\subset A_{1,\infty}$ has a 
finite depth.
Let $N=A_{0,\infty}$, $M=A_{1,\infty}$, $P=A_{\infty,0}$,
$Q=A_{\infty,1}$.
We define $A_{k,-1}=A'_{0,\infty}\cap A_{k,\infty}$ for $k\ge0$ 
and $A_{k,-2}=A'_{1,\infty}\cap A_{k,\infty}$ for $k\ge1$.
By the compactness argument, we have
$A_{k,-2}\subset A_{k,-1}\subset A_{k,0}$.
We define $R$ and $S$ to be the GNS-completions of
$\bigcup_{k=0}^\infty A_{k,-1}$ and 
$\bigcup_{k=1}^\infty A_{k,-2}$ with respect to the trace $\tr$,
respectively.  By the finite depth assumption on $N\subset M$,
these are also hyperfinite type II$_1$ factors.

As noted in \cite[page 371]{S1}, the following is a commuting
square for any $k\ge0$.
\[
\begin{array}{ccc}
A_{k,-1} & \subset & A_{k,0}\\
\cap && \cap \\
A_{k+1,-1} & \subset & A_{k+1,0}.
\end{array}
\]
For the same reason, the following also a commuting square
for any $k\ge1$.
\[
\begin{array}{ccc}
A_{k,-2} & \subset & A_{k,-1}\\
\cap && \cap \\
A_{k+1,-2} & \subset & A_{k+1,-1}.
\end{array}
\]

We apply the compactness argument to the following commuting
squares and the subfactor 
$R \subset A_{\infty,l}$ for some fixed $l$.
\[
\begin{array}{ccc}
A_{0,-1} & \subset & A_{0,l}\\
\cap && \cap \\
A_{1,-1} & \subset & A_{1,l}\\
\cap && \cap \\
A_{2,-1} & \subset & A_{2,l}\\
\downarrow && \downarrow \\
R & \subset & A_{\infty,l}.
\end{array}
\]
Then we have $R'\cap A_{\infty,l}\subset A_{0,l}$.
Since $A_{0,l}\subset N$, the converse inclusion is trivial, so
we have $R'\cap A_{\infty,l}=A_{0,l}$.
We similarly have $S'\cap A_{\infty,l}=A_{1,l}$.  That is,
the commuting square
\[
\begin{array}{ccc}
A_{k,l} & \subset & A_{k,l+1}\\
\cap && \cap \\
A_{k+1,l} & \subset & A_{k+1,l+1}
\end{array}
\]
is given as
\[
\begin{array}{ccc}
R'\cap A_{\infty,l} & \subset & R'\cap A_{\infty,l+1}\\
\cap && \cap \\
S'\cap A_{\infty,l} & \subset & S'\cap A_{\infty,l+1}.
\end{array}
\]
Using \cite[Lemma 11.1]{EK2}, we identify the above commuting with
the following commuting squares
\[
\begin{array}{ccc}
\End({}_R Q\otimes_P Q\otimes_P\cdots\otimes_P Q_P)
& \subset & 
\End({}_R Q\otimes_P Q\otimes_P\cdots\otimes_P Q\otimes_P Q_Q)
\\
\cap && \cap \\
\End({}_S Q\otimes_P Q\otimes_P\cdots\otimes_P Q_P)
& \subset & 
\End({}_S Q\otimes_P Q\otimes_P\cdots\otimes_P Q\otimes_P Q_Q), 
\end{array}
\]
where the number of copies of $Q$ is in the upper left corner is
$l/2$ if $l$ is even and
\[
\begin{array}{ccc}
\End({}_R Q\otimes_P Q\otimes_P\cdots\otimes_P Q_Q)
& \subset & 
\End({}_R Q\otimes_P Q\otimes_P\cdots\otimes_P Q_P)
\\
\cap && \cap \\
\End({}_S Q\otimes_P Q\otimes_P\cdots\otimes_P Q_Q)
& \subset & 
\End({}_S Q\otimes_P Q\otimes_P\cdots\otimes_P Q_P), 
\end{array}
\]
where the number of copies of $Q$ is in the upper left corenet is
$(l+1)/2$ if $l$ is odd.
We further have
$\End({}_R Q\otimes_P Q\otimes_P\cdots\otimes_P Q_P)
=\End({}_R P\otimes_P Q\otimes_P Q\otimes_P\cdots\otimes_P Q_P)$
and similar identities for the other three $\End$ spaces.
So our $A_{kl}$ is of the form of Theorem \ref{sato}, except
for the matter of Morita equivalence.

Since the finite depth assumption
implies strong amenability by \cite[Theorem 1]{P2},
the subfactor $S\subset R$ is anti-isomorphic 
to $N_1\subset N$ given by the downward basic construction
of $N\subset M$ by \cite[Theorem 2]{P2}.
This shows that the global indices of $N\subset M$ and $S\subset R$
are the same.  Since the global indices of $P\subset Q$ and
$N\subset M$ are the same by \cite[Corollary 2.5]{S1}, we know
that the global indices of $P\subset Q$ and $S\subset R$ are the
same.

By \cite[Theorem 2.4]{S2}, the fusion categories of $Q$-$Q$ bimodules
arising from $P\subset Q$ and $S\subset Q$ are the same, so the
global indices of $P\subset Q$ and $S\subset Q$ are the same.
By Lemma \ref{intermediate}, we now conclude that
our $A_{kl}$ is of the form of Theorem \ref{sato}. 

We have already seen in this case that $S\subset R$ is anti-isomorphic 
to $N_1\subset N$.
\end{proof}

\begin{remark}{\rm
In the above setting, we have $R=P$ if and only if the bi-unitary
connection is \textsl{flat}.  (See \cite[Definition 11.16]{EK2}.)
This is the case that was handled in 
\cite[Lemma 3.1]{S3}.
The canonical commuting square corresponds to the case
where $S=P_1$, $R=P$ and $P_1$ is a downward basic construction
of $P\subset Q$.
}\end{remark}

As a byproduct of the above characterization, we also
characterize a commuting square producing a given hyperfinite
II$_1$ subfactor of finite index and finite depth.
Let $N\subset M$ be a hyperfinite type II$_1$ subfactor 
of finite index and finite depth.
Let $P\subset Q$ be another hyperfinite
II$_1$ subfactor of finite index and finite depth that is
opposite Morita equivalent to $N\subset M$.  Let ${}_{N^\opp} X_P$ be a
$N^\opp$-$P$ bimodule giving Morita equivalence between the
fusion category of $N^\opp$-$N^\opp$ bimodules arising from 
$N_1^\opp\subset N^\opp$
and the one of $P$-$P$ bimodules arising from $P\subset Q$.
Using ${}_{N_1^\opp} N^\opp \otimes_{N^\opp} X \otimes_P Q_Q$ 
as a generator of a paragroup (replacing ${}_N M_M$ in
\cite[Chapter 10]{EK2}), 
we can realize the subfactor $N_1^\opp\subset Q$ and then we
also realize $N^\opp$ and $P$ as intermediate algebras, so
we have $N_1^\opp\subset N^\opp\subset P\subset Q$.
We now see that the above construction of commuting squares
$\{A_{0l}\subset A_{1l}\}_l$ gives a subfactor that is isomorphic
to $N\subset M$.  This gives the following.

\begin{corollary}\label{co1}
Any finite dimensional commuting squares
$\{A_{0,l}\subset A_{1,l}\}_l$ giving a hyperfinite II$_1$ subfactor
$A_{0,\infty}\subset A_{1,\infty}$
that is isomorphic to $N\subset M$ is of the above form.
\end{corollary}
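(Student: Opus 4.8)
The plan is to read the required data off Theorem \ref{main} and the argument proving it, and then to recognize that data as an admissible input to the construction described just above the corollary. So let $\{A_{0,l}\subset A_{1,l}\}_l$ be finite-dimensional commuting squares whose limit subfactor $A_{0,\infty}\subset A_{1,\infty}$ is isomorphic to $N\subset M$, and complete them to a double sequence $\{A_{kl}\}_{k,l\ge0}$ as in Section 2. Since $N\subset M$ has finite index and finite depth, Theorem \ref{main} applies; I keep the notation of its proof, so $P=A_{\infty,0}$, $Q=A_{\infty,1}$, and $R,S$ are the hyperfinite II$_1$ factors generated by $\bigcup_k(A'_{0,\infty}\cap A_{k,\infty})$ and $\bigcup_k(A'_{1,\infty}\cap A_{k,\infty})$, so that $S\subset R\subset P\subset Q$. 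That proof supplies three things I will use: $S\subset R$ is anti-isomorphic to the downward basic construction $N_1\subset N$ of $N\subset M$; the finite-depth subfactors $S\subset R$, $S\subset Q$ and $P\subset Q$ have equal global indices; and, through \cite[Lemma 11.1]{EK2} and Ocneanu's compactness argument, the whole double sequence $\{A_{kl}\}$ (extended to the columns $-1,-2$) is the string-algebra double sequence attached to the tower $S\subset R\subset P\subset Q$, that is, the one of Theorem \ref{sato} associated with the opposite Morita equivalent subfactors $P\subset Q$ and $S\subset R$ and the Morita-equivalence bimodule ${}_R P_P$.

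First I would transport this data along the anti-isomorphism of $S\subset R$ onto $N_1\subset N$. It identifies $R$ with $N^\opp$ and $S$ with $N_1^\opp$, carries the fusion category of $R$-$R$ bimodules arising from $S\subset R$ onto that of $N^\opp$-$N^\opp$ bimodules arising from $N_1^\opp\subset N^\opp$ (which is the opposite of the fusion category of $N$-$N$ bimodules arising from $N\subset M$), and turns ${}_R P_P$ into an $N^\opp$-$P$ bimodule ${}_{N^\opp}X_P$. Applying Lemma \ref{intermediate} to $S\subset R\subset P\subset Q$, whose hypotheses are exactly the facts recalled in the previous paragraph, shows that ${}_{N^\opp}X_P$ implements a Morita equivalence between the fusion category of $N^\opp$-$N^\opp$ bimodules arising from $N_1^\opp\subset N^\opp$ and the fusion category of $P$-$P$ bimodules arising from $P\subset Q$; in particular $P\subset Q$ is opposite Morita equivalent to $N\subset M$. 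Hence $(N\subset M,\, P\subset Q,\, {}_{N^\opp}X_P)$ is an admissible input for the construction preceding the corollary.

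Finally I would verify that this construction, run on that input, returns the original commuting squares. Under the identifications above, its paragroup generator ${}_{N_1^\opp}N^\opp\otimes_{N^\opp}X\otimes_P Q_Q$ is the composite ${}_S R\otimes_R P\otimes_P Q_Q$, i.e.\ the bimodule ${}_S Q_Q$ of the inclusion $S\subset Q$ together with its factorization through $R$ and $P$. Since $N\subset M$ is of finite depth it is strongly amenable, so by Popa's classification this generator recovers the tower $S\subset R\subset P\subset Q$ with its string-algebra structure, and the basic-construction recipe of Theorem \ref{sato} then reproduces the double sequence $\{A_{kl}\}$ of the first paragraph, hence the original $\{A_{0,l}\subset A_{1,l}\}_l$. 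I expect this last step to be the main obstacle: one must be sure that the abstract construction returns the original commuting squares on the nose, not merely some double sequence whose limit subfactor is isomorphic to $N\subset M$, and this is precisely where Popa's classification theorem and the compactness argument enter. Once the data from the proof of Theorem \ref{main} is available, the Morita-theoretic bookkeeping of the second paragraph is routine.
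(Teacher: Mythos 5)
Your proposal is correct and follows essentially the same route as the paper: the corollary is obtained by running the proof of Theorem \ref{main} on the given commuting square to extract the tower $S\subset R\subset P\subset Q$, using Popa's theorem to identify $S\subset R$ with $(N_1\subset N)^\opp$, and Lemma \ref{intermediate} to recognize ${}_R P_P$ as the Morita-equivalence bimodule ${}_{N^\opp}X_P$ of the construction preceding the corollary. The paper leaves all of this implicit (the corollary is stated as an immediate consequence of that construction together with Theorem \ref{main}); your write-up merely makes explicit the same identifications, including the point that the compactness argument and \cite[Lemma 11.1]{EK2} already pin down the original $A_{kl}$ as the relevant $\End$ spaces.
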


Note that for a given fusion category $\mathcal C$, 
we have only finitely many
fusion categories that are Morita equivalent to $\mathcal C$.
(This is because we have only finitely many irreducible
$Q$-systems \cite[Section 6]{L} for a given fusion category. A 
$Q$-system is a unitary version of a Frobenius algebra.)
But after fixing the fusion category of $P$-$P$ bimodules,
we have countably many subfactors $P\subset Q$ and countably
many bimodules ${}_{M^\opp} X_P$.  (Note that we do not require
irreducibility of $P\subset Q$  or ${}_{M^\opp} X_P$
even when $N\subset M$ is
irreducible.)  So the total number of such commuting squares
for a fixed subfactor $N\subset M$ is countable.
Note it is trivial that we have countably many commuting squares
giving the same subfactor because if 
$\{A_{0,l}\subset A_{1,l}\}_l$  is one example, then
$\{A_{0,ml}\subset A_{1,ml}\}_l$ is another example for any $m$.

As in \cite{K6}, our bi-unitary connection is regarded as a
$4$-tensor related to $2$-dimensional topological order.  This theorem
also characterizes
which $4$-tensor realizes a given subfactor $N\subset M$.

\section{Examples}

In this section, we work out some examples to illustrate our results
in the previous section.

\begin{example}{\rm
Let $N\subset M$ be the Jones subfactor with principal graph $A_n$.
The fusion category of $N$-$N$ (or $M$-$M$) bimodules is isomorphic
to the even part of the Wess-Zumino-Witten category $SU(2)_{n-1}$.
All the fusion categories that are Morita equivalent to 
$SU(2)_{n-1}$ have been classified by Ocneanu \cite{O3}.
We now look at only the even part of these fusion categories.
(Also see \cite[Section 2]{KLPR} for classification of $Q$-systems.)
If $n=4m-3$, then we have a fusion category arising from the 
$\alpha$-induction for a simple current extension of order 2.
If $n=11$ or $n=29$, we also have
fusion categories arising from the $\alpha$-induction for conformal
embeddings $SU(2)_{10}\subset SO(5)_1$ or $SU(2)_{28}\subset (G_2)_1$.
These exhaust all
fusion categories that are Morita equivalent to the one arising
from a subfactor with principal graph $A_n$.
(See \cite{BEK1} for a general theory of $\alpha$-induction
and \cite[Section 5]{BEK2} for this classification.)
These give all possible commuting squares producing a subfactor
with principal graph $A_n$ as in Corollary \ref{co1}.
They are related to the Goodman-de la Harpe-Jones subfactors
\cite[Section 4.5]{GHJ}, \cite[Section 11.6]{EK2} as in \cite{G2}.

The easiest nontrivial example among these is the 
Goodman-de la-Harpe-Jones subfactor arising from $E_6$ having the
Jones index $3+\sqrt3$.  (See \cite[Proposition 4.5.2]{GHJ},
\cite[Example 11.25]{EK2}.)  This
subfactor is given as $A_{0,\infty}\subset A_{1,\infty}$,
and the subfactor $A_{\infty,0}\subset A_{\infty,1}$ has
principal graph $A_{11}$.  The graph ${\mathcal G}_1$ in
\cite[Fig. 11.65]{EK2} is not a principal graph of any subfactor,
but it is understood as in Theorem \ref{sato}.
}\end{example}

\begin{example}{\rm
If $n\neq 4m-3, 11$, then only fusion category that is Morita equivalent
to the $N$-$N$ bimodules of the subfactor with principal graph $A_n$
is itself.  In this case, the only commuting squares giving this
subfactor are the canonical commuting square, its basic construction
possibly cut with some projection in the higher relative commutants.
}\end{example}

\begin{example}{\rm
Let $A\subset B$ be a type II$_1$ subfactor with principal graph
$A_{4m-3}$, $C=B$ and $D$ be the crossed product of $B$ with 
the action of ${\mathbb Z}/2 {\mathbb Z}$ arising from the two
endpoints of the principal graph $A_{4m-3}$ as in 
$\alpha$ of \cite[Corollary 3.6]{G1}.  (We do not need 
hyperfiniteness of $B$ here, but if we do have hyperfiniteness,
then this action is of ${\mathbb Z}/2 {\mathbb Z}$
the ``orbifold action'' mentioned at the end of 
\cite[Section 3]{K2} arising from \cite{K1}.)
If we apply the construction in Theorem \ref{sato} to this
case, we get disconnected Bratteli diagrams for
$A_{2k,2l}\subset A_{2k,2l+1}$ for sufficiently large $k$ and
$l$, so the construction does not satisfy our requirement.
This failure is due to the fact the global index of $C\subset D$
is 2, which is smaller than that of $A\subset B$.
}\end{example}

In these examples so far, we have $P=R$ in Theorem \ref{main}.
We now look at an example where we have $P\neq R$.

\begin{example}{\rm
Consider a bi-unitary connection for the case all the four
graphs are the Dynkin diagram $E_7$ as in \cite[Fig. 11.32]{EK2}.
This bi-unitary connection is not flat and the principal
graph of the resulting subfactor is $D_{10}$ as shown in
\cite[Section 3]{EK1}.  In this case, all the three subfactors
$S\subset R$, $R\subset P$ and $P\subset Q$ have the principal
graph $D_{10}$.  (See \cite[Example 2.7]{S1} for the subfactor
$R\subset P$ in this example.)
}\end{example}

\begin{example}{\rm
Here is also another (rather trivial) example of $R\subsetneq P$.
Let $\{B_{kl}\}_{k,l}$ be the canonical commuting square of
a hyperfinite type II$_1$ subfactor of finite index and finite
depth.  Take a nonzero projection $p\in B_{2m,2n}$ and set
$A_{k,l}=p B_{2m+k,2n+l}p$, which gives commuting squares as above.
Then we see that $S\subset R$ and $P\subset Q$ are isomorphic
to the original subfactor and that $R\subset P$ is
isomorphic to $qP\subset qQ_{2m-1}q$ where $P\subset Q\subset Q_1
\subset\cdots$ is the Jones tower and $q$ is some nonzero
projection in $P'\cap Q_{2m-1}$.
In this example, the bi-unitary connection for $\{A_{kl}\}_{k,l}$
is the same as the original one for $\{B_{kl}\}_{k,l}$, but the
choice of the initial vertex $*$ for $A_{00}$ is different from
the canonical one.
}\end{example}

Also, fusion categories that are Morita equivalent to the $N$-$N$
bimodules of the Asaeda-Haagerup subfactor $N\subset M$ \cite{AH} are
also classified in \cite{GIS}.  We can, in principle, list 
commuting squares producing the Asaeda-Haagerup subfactor.

\end{document}